\providecommand{\U}[1]{\protect\rule{.1in}{.1in}}
\theoremstyle{plain}
\numberwithin{equation}{section}
\newlength{\defbaselineskip}
\newcommand{\setlinespacing}[1]%
{\setlength{\baselineskip}{#1 \defbaselineskip}}
\newcommand{\norm}[1]{\left\Vert#1\right\Vert}
\newcommand{\unorm}[1]{\left\vert\left\vert\left\vert#1\right\vert\right\vert\right\vert}
\newtheorem{thm}{Theorem}[section]
\newtheorem{cor}[thm]{Corollary}
\newtheorem{lem}[thm]{Lemma}
\newtheorem{prop}[thm]{Proposition}
\newtheorem{property}[thm]{Property}
\newtheorem{rem}{Remark}[section]
\newcommand{\Complex}{\mathbb C}
\newcommand{\Real}{\mathbb R}
\newcommand{\mathleft}{\@fleqntrue\@mathmargin0pt}
\newcommand{\mathcenter}{\@fleqnfalse}
\begin{document}
	\title
	{Generalized Numerical Radius Inequalities for Schatten p-Norms}
	
	\author{Jana Hamza}\
	\author{Hassan Issa}

	\address{Jana Hamza\newline  Department of Mathematics, Faculty of Arts and Sciences, Lebanese International University, Bekaa, Lebanon  } \email{21610074@students.liu.edu.lb}
	\address{Hassan Issa\newline  Department of Mathematics, Faculty of Science, Lebanese University, Beirut, Lebanon  } \email{hassan.issa@mathematik.uni-goettingen.de}	

	\begin{abstract}
	In this paper, we present various inequalities for generalized numerical radius of $2\times2$ block matrices for Schatten p-norm. Moreover, we give a refinement of the triangle inequality for the Schatten p-generalized  numerical radius.
		
	\end{abstract}
	\keywords{numerical radius, Schatten p- norm,  generalized Schatten p-numerical radius, inequality}
	  \subjclass[2010]{15B57}
	
	\maketitle
\section{Introduction}

\hspace{\parindent} Consider the space $B(H)$ of bounded linear operators over a Hilbert space $H$.  For $A\in B(H)$, the numerical radius, the usual operator norm, and the Schatten p-norm, are denoted by $\omega{(A)}$, $\norm{A}$ and $\norm{A}_{p}$ respectively.\\
A norm $\unorm{\cdot}$ on $B(H)$ is said to be unitarily invariant if $\unorm{UAV}=\unorm{A}$, where $A\in B(H)$ and $U,V\in B(H)$ being unitary, and weakly unitarily invariant if $\unorm{UAU^{\ast}}=\unorm{A}$ where $A\in B(H)$ and $U\in B(H)$ being unitary. It is known that $\norm{A}_{p}$ is unitarily invariant.\\
We should note that if $B(H)$ is the space of $n\times n$ complex matrices, $M_{n}(\Complex)$, then for $A,B\in M_{n}(\Complex)$ we have
\begin{equation}\label{eq1}
	\unorm{ A\oplus A^{\ast}}
	=\unorm{ A\oplus A}
\end{equation}
and 

\begin{equation}\label{eq2}
	\unorm{ A\oplus B} =\unorm{
		\begin{bmatrix}
			0 & A \\
			B & 0
	\end{bmatrix} }
	\hspace{5pt}\mbox{(see \cite{[11]})}.
\end{equation}   
Moreover, we have

\begin{equation}\label{eqn5}
	\norm{ A\oplus B} _{p}=(\norm{ A}
	_{p}^{p}+\norm{ B} _{p}^{p})^{\frac{1}{p}},
\end{equation}
and 
\begin{equation}\label{eqn6}
	\norm{ A\oplus A} _{p} = 2^{\frac{1}{p}}\norm{ A}_{p}.
\end{equation}

The numerical radius $\omega{(\cdot)}$ is defined by $\omega{(A)}=\underset{\theta\in\Real}{\sup}\norm{Re(e^{i\theta}A)}$, where $A\in B(H)$. Due to its importance, the numerical radius has been generalized several times, and their last was given by Abu-Omar and Kittaneh \cite{[2]} in 2019, in which they generalized it on $B(H)$. This generalization is denoted  by $\omega_{N}(\cdot)$, and is defined by $\omega_{N}(A)=\underset{\theta\in\Real}{\sup}N(Re(e^{i\theta}A))$. It was proved by the authors in \cite{[2]} that $\omega_{N}(\cdot)$ generalizes the numerical radius $\omega(\cdot)$ if $N$ is the usual operator norm.

In this paper, we are interested in studying the space of $n\times n$ complex matrices, $M_{n}(\Complex)$. We should note that $\omega_{N}(\cdot)$ has the following two important properties, see \cite{[1]}.
\begin{property}\label{omegan}
	The following properties hold:
	\begin{itemize}
		\item[a)] The norm $\omega_{N}(\cdot)$ is self adjoint.
		\item[b)] If the norm $N(\cdot)$ is weakly unitarily invariant, then so is $\omega_{N}(\cdot)$.
	\end{itemize}
\end{property} 

In \cite{[6]}, Bhatia and Kittaneh where able to prove the following theorem that relates the Shatten p-norm of an $n\times n$ block matrix by that of its block entries.

\begin{thm}
	Let $T\in M_{n}(\mathbb{C} )$ such that $T=[T_{ij}]$, $1\leq i,j\leq n$ and $
	1\leq p\leq \infty $ then
	
	\begin{equation}\label{eqn3}
		n^{2-p}\norm{ T} _{p}^{p}\leq \overset{n}{\underset{i,j=1}{
				\sum }}\norm{ T_{ij}} _{p}^{p}\leq \norm{ T}
		_{p}^{p},
	\end{equation}
	for $2\leq p\leq \infty $; and
	
	\begin{equation}\label{eqn4}
		\norm{ T} _{p}^{p}\leq \overset{n}{\underset{i,j=1}{\sum }}
		\norm{ T_{ij}} _{p}^{p}\leq n^{2-p}\norm{ T}
		_{p}^{p},
	\end{equation}
	for $1\leq p\leq 2$.
\end{thm}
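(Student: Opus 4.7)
The plan is to combine a pinching identity for block matrices with the Clarkson--McCarthy inequalities for the Schatten classes.

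First I would establish the elementary pinching bound $\|D(T)\|_p\le\|T\|_p$ for every $p\ge 1$, where $D(T)=\operatorname{diag}(T_{11},\dots,T_{nn})$ denotes the block diagonal of $T$. Setting $\omega=e^{2\pi i/n}$ and $U_k=\operatorname{diag}(\omega^{k}I,\omega^{2k}I,\dots,\omega^{nk}I)$ for $k=0,1,\dots,n-1$, a direct computation gives
$$\frac{1}{n}\sum_{k=0}^{n-1} U_k^{\ast}\,T\,U_k \;=\; D(T),$$
so the triangle inequality together with unitary invariance of $\|\cdot\|_p$ yields $\|D(T)\|_p\le\|T\|_p$. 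Combined with equation~\eqref{eqn5}, this immediately produces $\sum_{i=1}^n\|T_{ii}\|_p^p\le\|T\|_p^p$ for every $p\ge 1$.

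The heart of the matter is passing from the block-diagonal to all blocks with the sharp constant. I would invoke the Clarkson--McCarthy inequality applied to a $2\times2$ block decomposition: for $p\ge 2$,
$$\|A_{11}\|_p^p+\|A_{12}\|_p^p+\|A_{21}\|_p^p+\|A_{22}\|_p^p \;\le\; \left\|[A_{ij}]\right\|_p^p,$$
with the inequality reversed for $1\le p\le 2$. This follows from Clarkson applied to $A$ and $U^{\ast}AU$ with $U=I\oplus(-I)$ (which extracts the diagonal blocks), then a second conjugation by the block flip $\left[\begin{smallmatrix}0 & I\\ I & 0\end{smallmatrix}\right]$ to account for the off-diagonal blocks, and finally invoking \eqref{eqn5}. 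Iterating this $2\times2$ inequality, or equivalently using its $n$-variable form obtained by averaging against the $n$-th roots of unity over $\mathbb{Z}/n\mathbb{Z}$, gives the right-hand inequality of \eqref{eqn3} and the left-hand inequality of \eqref{eqn4}.

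For the remaining halves---the lower bound $n^{2-p}\|T\|_p^p\le\sum_{i,j}\|T_{ij}\|_p^p$ for $p\ge 2$ and its mirror for $1\le p\le 2$---I would exploit Schatten trace duality. The inequalities for $p$ and its Hölder conjugate $p'=p/(p-1)$ are formally dual via $\|T\|_p=\sup\{|\operatorname{tr}(TX^{\ast})|:\|X\|_{p'}\le 1\}$, so the bound proved in the previous paragraph for the exponent $p'$ translates into the complementary bound for $p$. Alternatively, one can argue directly from the power-mean inequality applied to the singular values, using convexity (respectively concavity) of $t\mapsto t^{p/2}$ according as $p\ge2$ or $1\le p\le 2$.

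The main obstacle is obtaining the sharp constant $n^{2-p}$ rather than a looser power of $n$. A naive triangle-inequality-plus-Hölder estimate applied to $T=\sum_{i,j}E_{ij}\otimes T_{ij}$ loses a factor of $n$, so the finer Clarkson structure of the Schatten classes must be used in an essential way; carefully tracking how the constant $2^{p-1}$ accumulates through the $\log_2 n$ iterations of the $2\times2$ inequality---or, equivalently, executing the $n$-variable Fourier averaging once and checking the resulting exponent---is the only delicate bookkeeping step of the argument.
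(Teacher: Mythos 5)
First, note that the paper contains no proof of this theorem: it is imported verbatim from Bhatia and Kittaneh \cite{[6]}, so there is nothing in-paper to compare your argument against. On its own terms, your proposal correctly establishes the $n=2$ case (the only case the paper actually uses, in Theorem \ref{thm}) and the constant-$1$ halves of (\ref{eqn3}) and (\ref{eqn4}) for every $n$: the pinching identity, the two Clarkson--McCarthy inequalities applied to $A$ and $U^{\ast}AU$ with $U=I\oplus(-I)$ (together with (\ref{eq2}) and (\ref{eqn5}) to identify the Schatten norm of the off-diagonal part), and the iteration with zero-padding all work, because padding does not disturb a constant equal to $1$.

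The genuine gap is in the two halves carrying the constant $n^{2-p}$ when $n$ is not a power of $2$, and none of your three proposed routes closes it. Iterating the $2\times2$ inequality after padding to size $2^{\lceil\log_{2}n\rceil}$ produces the constant $\bigl(2^{\lceil\log_{2}n\rceil}\bigr)^{2-p}$, strictly weaker than $n^{2-p}$ for $p\neq2$. The duality step is circular: the block-extraction map $T\mapsto\bigoplus_{i,j}T_{ij}$ is its own adjoint under trace duality, so the constant-$1$ inequality at exponent $p'$ dualizes back to the constant-$1$ inequality at exponent $p$, while the left half of (\ref{eqn3}) dualizes precisely to the right half of (\ref{eqn4}); the two inequalities you still owe are dual to \emph{each other}, not to the ones you have already proved. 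The power-mean fallback compares $\norm{T_{ij}}_{2}$ with $\norm{T_{ij}}_{p}$ and therefore imports a constant depending on the size of the individual blocks rather than on $n$. Finally, the ``$n$-variable form obtained by averaging against the $n$-th roots of unity'' is exactly what is needed, but it is not equivalent to the $2\times2$ case: it is the several-operator Clarkson inequality for the $n$ generalized block diagonals of $T$, itself a nontrivial theorem that does not follow by iterating the two-operator version. The standard repair, and the argument in \cite{[6]}, is complex interpolation of the reassembly map $\bigoplus_{i,j}T_{ij}\mapsto T$ between the endpoints $p=2$ (where $\norm{T}_{2}^{2}=\sum_{i,j}\norm{T_{ij}}_{2}^{2}$ exactly) and $p=\infty$ (where $\norm{T}\leq n\max_{i,j}\norm{T_{ij}}$, obtained by splitting $T$ into its $n$ generalized diagonals); you should either carry that out or cite the several-operator Clarkson inequalities explicitly.
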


Motivated by the results of Bhatia and Kittaneh in \cite{[6]}, and those of Aldalabih and Kittaneh in \cite{[3]}, we aim in this paper to prove some generalized numerical radius inequalities for partitioned general $2\times 2$ block matrices considering the case when $N$ is taken to be the Schatten p-norm. We denote this norm by $\omega_{p}(\cdot)$ and call it the Schatten p-generalized numerical radius. We emphasize on finding such inequalities for the off-diagonal part of block matrices. We also provide an application of this norm in which we give a refinement of the triangle inequality for the Schatten p-generalized numerical radius. The following Lemma was proved by the authors in \cite{[1]}, and will be used in our work.
\begin{lem} \label{lemma1.4}
	Let $A$, $B\in M_{n}(\Complex)$, then
	\begin{equation*}
		\omega_{p}\left(
		\begin{bmatrix}
			0 & B \\
			B & 0
		\end{bmatrix}\right)=
		2^{\frac{1}{p}}\omega_{p}\left(B\right)
	\end{equation*}
for all p.
\end{lem}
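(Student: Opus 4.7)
The plan is to work directly from the definition
$$\omega_p(T)=\sup_{\theta\in\mathbb{R}}\bigl\|\operatorname{Re}(e^{i\theta}T)\bigr\|_p$$
applied to the off-diagonal block matrix $T=\begin{bmatrix}0 & B\\ B & 0\end{bmatrix}$, and reduce the norm inside the supremum to a Schatten $p$-norm involving only $B$ by exploiting the block identities \eqref{eq2} and \eqref{eqn6} already recorded in the introduction.

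First I would compute $\operatorname{Re}(e^{i\theta}T)$ explicitly. Since $T^{\ast}=\begin{bmatrix}0 & B^{\ast}\\ B^{\ast} & 0\end{bmatrix}$, a direct calculation gives
$$\operatorname{Re}(e^{i\theta}T)=\tfrac{1}{2}\bigl(e^{i\theta}T+e^{-i\theta}T^{\ast}\bigr)=\begin{bmatrix}0 & C_\theta\\ C_\theta & 0\end{bmatrix},\qquad C_\theta:=\operatorname{Re}(e^{i\theta}B).$$
So the problem reduces to evaluating the Schatten $p$-norm of a self-adjoint block matrix with identical off-diagonal entry $C_\theta$.

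Second, I would apply identity \eqref{eq2} (with $A=B=C_\theta$) to rewrite this as $\|C_\theta\oplus C_\theta\|_p$, and then invoke \eqref{eqn6} to obtain
$$\bigl\|\operatorname{Re}(e^{i\theta}T)\bigr\|_p=\|C_\theta\oplus C_\theta\|_p=2^{1/p}\,\|C_\theta\|_p=2^{1/p}\,\bigl\|\operatorname{Re}(e^{i\theta}B)\bigr\|_p.$$

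Finally, taking the supremum over $\theta\in\mathbb{R}$ on both sides yields
$$\omega_p(T)=2^{1/p}\sup_{\theta}\bigl\|\operatorname{Re}(e^{i\theta}B)\bigr\|_p=2^{1/p}\,\omega_p(B),$$
which is the claim. There is no real obstacle here; the only subtle point is making sure that the block manipulations \eqref{eq2}–\eqref{eqn6} are applicable to the self-adjoint operator $\operatorname{Re}(e^{i\theta}T)$ rather than to $T$ itself, and that the factor $2^{1/p}$ can be pulled out before the supremum since it is independent of $\theta$.
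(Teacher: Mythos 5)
Your proposal is correct and follows essentially the same route the paper uses: the paper defers this lemma to \cite{[1]}, but its own proof of the more general Lemma \ref{cor1} is exactly your computation (reduce $Re(e^{i\theta}T)$ to an off-diagonal block with repeated entry, then apply \eqref{eq2} and \eqref{eqn6} to pull out the factor $2^{\frac{1}{p}}$), and your argument is just the case $A=B$ of that. No gaps.
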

\section{General $2 \times 2$ Block Matrices Inequalities}\label{sec8}
\hspace{\parindent}
In this section we give bounds for the generalized Schatten p-numerical radius of general $2\times2$ block matrices. We give emphasis for $2\times2$ block diagonal matrices. Most of the results in this section, generalize those presented in \cite{[3]}.

\begin{lem}\label{cor1}
	Let $A, B$ $\in$ $M$$_{n}(\Complex)$ then for all $p$, we have
	\begin{equation*}
		\omega_{p}\left(
		\begin{bmatrix}
			0 & A \\
			B & 0
		\end{bmatrix}
		\right) =2^{\frac{1}{p}-1}\underset{\theta \in\mathbb{R}}{\sup }\norm{
			e^{i\theta }A+e^{-i\theta }B^{\ast}} _{p}.
	\end{equation*}
	
\end{lem}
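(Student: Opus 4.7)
The plan is to unfold $\omega_p$ via its definition and then reduce the resulting Schatten $p$-norm of a block matrix to the Schatten $p$-norm of a single block using the identities \eqref{eq1}, \eqref{eq2}, \eqref{eqn6} already recorded in the introduction.

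First, I would set $T = \begin{bmatrix} 0 & A \\ B & 0 \end{bmatrix}$ and write out $\mathrm{Re}(e^{i\theta}T) = \tfrac{1}{2}(e^{i\theta}T + e^{-i\theta}T^{\ast})$. Since $T^{\ast} = \begin{bmatrix} 0 & B^{\ast} \\ A^{\ast} & 0 \end{bmatrix}$, a direct computation gives
\begin{equation*}
\mathrm{Re}(e^{i\theta}T) = \frac{1}{2}\begin{bmatrix} 0 & X_{\theta} \\ X_{\theta}^{\ast} & 0 \end{bmatrix},
\qquad \text{where } X_{\theta} := e^{i\theta}A + e^{-i\theta}B^{\ast},
\end{equation*}
because the lower-left entry is $e^{i\theta}B + e^{-i\theta}A^{\ast} = (e^{-i\theta}B^{\ast} + e^{i\theta}A)^{\ast} = X_{\theta}^{\ast}$.

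Next I would evaluate the Schatten $p$-norm of this Hermitian off-diagonal block matrix. Using \eqref{eq2} with the blocks $X_{\theta}$ and $X_{\theta}^{\ast}$, one has $\bigl\|\begin{bmatrix} 0 & X_{\theta} \\ X_{\theta}^{\ast} & 0 \end{bmatrix}\bigr\|_p = \|X_{\theta} \oplus X_{\theta}^{\ast}\|_p$. Applying \eqref{eq1} gives $\|X_{\theta}\oplus X_{\theta}^{\ast}\|_p = \|X_{\theta}\oplus X_{\theta}\|_p$, and then \eqref{eqn6} yields $2^{1/p}\|X_{\theta}\|_p$. Combining these steps,
\begin{equation*}
\bigl\| \mathrm{Re}(e^{i\theta}T) \bigr\|_p = \frac{1}{2}\cdot 2^{1/p}\|X_{\theta}\|_p = 2^{1/p-1}\,\|e^{i\theta}A + e^{-i\theta}B^{\ast}\|_p.
\end{equation*}

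Finally, I would take the supremum over $\theta \in \mathbb{R}$ on both sides. By the definition $\omega_p(T) = \sup_{\theta}\|\mathrm{Re}(e^{i\theta}T)\|_p$, this yields exactly the claimed identity. There is no real obstacle in this argument; the only point that requires attention is verifying that the $(2,1)$ block of $\mathrm{Re}(e^{i\theta}T)$ is indeed $X_{\theta}^{\ast}$, so that \eqref{eq2} and \eqref{eq1} can be chained together and the constant $2^{1/p-1}$ arises cleanly.
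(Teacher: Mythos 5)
Your proposal is correct and follows essentially the same route as the paper: unfold $\omega_{p}$, identify $\mathrm{Re}(e^{i\theta}T)=\tfrac{1}{2}\begin{bmatrix} 0 & X_{\theta}\\ X_{\theta}^{\ast} & 0\end{bmatrix}$, and then chain \eqref{eq2}, \eqref{eq1}, and \eqref{eqn6} to extract the factor $2^{\frac{1}{p}-1}$. The only (harmless) difference is that you verify explicitly that the $(2,1)$ block is $X_{\theta}^{\ast}$, a step the paper leaves implicit.
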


\begin{proof}
	By equations (\ref{eq1}), (\ref{eq2}) and (\ref{eqn6}), we have
	\begin{align*}
		\omega_{p}\left(
		\begin{bmatrix}
			0 & A \\
			B & 0
		\end{bmatrix}
		\right) &=\frac{1}{2}\underset{\theta \in \mathbb{R} }{\sup }\norm{
			\begin{bmatrix}
				0 & e^{i\theta }A+e^{-i\theta }B^{\ast } \\
				e^{i\theta }B+e^{-i\theta }A^{\ast } & 0
			\end{bmatrix}
		} _{p} \\
		&=\frac{1}{2}\underset{\theta \in \mathbb{R} }{\sup }\norm{
			\begin{bmatrix}
				e^{i\theta }A+e^{-i\theta }B^{\ast } & 0 \\
				0 & (e^{i\theta }A+e^{-i\theta }B^{\ast })^{\ast }
			\end{bmatrix}
		} _{p} \\
		&=\frac{1}{2}\underset{\theta \in \mathbb{R} }{\sup }\norm{
			\begin{bmatrix}
				e^{i\theta }A+e^{-i\theta }B^{\ast } & 0 \\
				0 & e^{i\theta }A+e^{-i\theta }B^{\ast }
			\end{bmatrix}
		} _{p} \\
		&=\frac{1}{2}\underset{\theta \in \mathbb{R} }{\sup }2^{\frac{1}{p}
		}\norm{ e^{i\theta }A+e^{-i\theta }B^{\ast}} _{p} \\
		&=2^{\frac{1}{p}-1}\underset{\theta \in \mathbb{R} }{\sup }\norm{
			e^{i\theta }A+e^{-i\theta }B^{\ast}} _{p}.
	\end{align*}
	
\end{proof}

\begin{prop}\label{prop}
	Let $A,B\in M_{n}(\mathbb{C})$, then the following inequality holds for all $p$
	\begin{equation*}
		\omega_{p}\left(
		\begin{bmatrix}
			A & 0 \\
			0 & B
		\end{bmatrix}\right)\leq \left(\omega_{p}^{p}(A)+\omega_{p}^{p}(B)\right)^{\frac{1}{p}}.
	\end{equation*} 
\end{prop}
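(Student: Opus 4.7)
The plan is to unpack the definition of $\omega_p$ directly on the block-diagonal matrix and then apply the Schatten $p$-norm identity for direct sums from equation (\ref{eqn5}).

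First I would observe that taking the real part commutes with the block-diagonal structure: for every $\theta \in \mathbb{R}$,
\begin{equation*}
\operatorname{Re}\!\left(e^{i\theta}\begin{bmatrix} A & 0 \\ 0 & B \end{bmatrix}\right) = \begin{bmatrix} \operatorname{Re}(e^{i\theta}A) & 0 \\ 0 & \operatorname{Re}(e^{i\theta}B) \end{bmatrix} = \operatorname{Re}(e^{i\theta}A) \oplus \operatorname{Re}(e^{i\theta}B).
\end{equation*}
Applying (\ref{eqn5}) to the right-hand side gives
\begin{equation*}
\norm{\operatorname{Re}\!\left(e^{i\theta}(A\oplus B)\right)}_p = \left(\norm{\operatorname{Re}(e^{i\theta}A)}_p^p + \norm{\operatorname{Re}(e^{i\theta}B)}_p^p\right)^{1/p}.
\end{equation*}

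Next I would bound each summand separately by the defining supremum of $\omega_p$. Since $\norm{\operatorname{Re}(e^{i\theta}A)}_p \leq \omega_p(A)$ and $\norm{\operatorname{Re}(e^{i\theta}B)}_p \leq \omega_p(B)$ for every real $\theta$, the expression inside the $p$-th root is bounded above by $\omega_p^p(A) + \omega_p^p(B)$, uniformly in $\theta$. Taking the supremum over $\theta$ on the left, which produces $\omega_p(A\oplus B)$ by definition, yields the claimed inequality.

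There is essentially no obstacle here: the two ingredients (the direct-sum formula for the Schatten norm and the fact that $\operatorname{Re}$ preserves the block-diagonal structure) combine in a single line. The only thing to be careful about is that the two suprema defining $\omega_p(A)$ and $\omega_p(B)$ are generally attained at different values of $\theta$, which is precisely why we only get an inequality and not an equality; passing the supremum inside the $p$-sum is the lossy step.
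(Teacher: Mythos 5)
Your proposal is correct and follows exactly the same route as the paper: expand the definition of $\omega_p$ on the block-diagonal matrix, use that $\operatorname{Re}$ preserves the direct-sum structure, apply equation (\ref{eqn5}), and bound each summand by the corresponding $\omega_p$. Your closing remark about the two suprema being attained at different $\theta$ correctly identifies why the step is only an inequality.
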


\begin{proof}
	We have
	\begin{align*}
		\omega_{p}\left(
		\begin{bmatrix}
			A & 0 \\
			0 & B
		\end{bmatrix}\right) &= \underset{\theta \in \mathbb{R}}{\sup}\norm{ Re \left( e^{i\theta}
			\begin{bmatrix}
				A & 0 \\
				0 & B
			\end{bmatrix}\right)} _{p} \\
		&= \underset{\theta \in \mathbb{R}}{\sup}\norm{
			\begin{bmatrix}
				Re(e^{i\theta}A) & 0 \\
				0 & Re(e^{i\theta}B)
		\end{bmatrix}} _{p}\\
		&= \underset{\theta \in \mathbb{R}}{\sup}\left(\norm{ Re(e^{i\theta}A)}
		_{p}^{p}+\norm{ Re(e^{i\theta}B)} _{p}^{p}\right)^{\frac{1}{p}}\hspace{30pt} \mbox{(by equation (\ref{eqn5}))}\\
		&\leq\left(\omega_{p}^{p}(A)+\omega_{p}^{p}(B)\right)^{\frac{1}{p}}
	\end{align*}
	
	\noindent as required.
\end{proof}

\begin{thm}\label{thm}
	Let $A=[A_{ij}]$ be a $2 \times 2$ block matrix, then
	\begin{equation}
		\omega_{p}^{p}(A)\leq \frac{1}{2^{p-2}} \underset{i,j=1}{\overset{2}{\sum }}\omega_{p}^{p}(a_{ij})
	\end{equation}
	for $2\leq p\leq \infty $, and
	\begin{equation}
		\omega_{p}^{p}(A)\leq \underset{i,j=1}{\overset{2}{\sum }}\omega_{p}^{p}(a_{ij})
	\end{equation}
	for $1\leq p\leq 2$, where
	\begin{equation*}
		a_{ij}=\left\{
		\begin{array}{c}
			A_{ij}\hspace{1.5in}i=j \\
			2^{-\frac{1}{p}}
			\begin{bmatrix}
				0 & A_{ij} \\
				A_{ji} & 0
			\end{bmatrix}
			\hspace{0.5in}i\neq j.
		\end{array}
		\right.
	\end{equation*}
\end{thm}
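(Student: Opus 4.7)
The plan is to apply the Bhatia--Kittaneh block-matrix inequalities (\ref{eqn3}) and (\ref{eqn4}) with $n=2$ to the operator $\mathrm{Re}(e^{i\theta}A)$ for each $\theta\in\mathbb{R}$, and then take the supremum in $\theta$. Since $\omega_p^{p}(A)=\sup_\theta \|\mathrm{Re}(e^{i\theta}A)\|_p^{p}$, the whole problem reduces to controlling the Schatten $p$-norm of each block of $\mathrm{Re}(e^{i\theta}A)$ by $\omega_p$ of the corresponding auxiliary matrix $a_{ij}$.

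First I expand
$$
\mathrm{Re}(e^{i\theta}A) =
\begin{bmatrix} \mathrm{Re}(e^{i\theta}A_{11}) & \tfrac12(e^{i\theta}A_{12}+e^{-i\theta}A_{21}^{*}) \\[2pt] \tfrac12(e^{i\theta}A_{21}+e^{-i\theta}A_{12}^{*}) & \mathrm{Re}(e^{i\theta}A_{22})\end{bmatrix},
$$
and apply (\ref{eqn3}) for $p\ge 2$ (respectively (\ref{eqn4}) for $1\le p\le 2$) with $n=2$, which bounds $\|\mathrm{Re}(e^{i\theta}A)\|_p^{p}$ by a constant $c_p$ (namely $2^{p-2}$ or $1$) times the sum of the $p$-th powers of the Schatten norms of the four blocks.

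Next I estimate each block. The diagonal ones are immediate: $\|\mathrm{Re}(e^{i\theta}A_{ii})\|_p\le\omega_p(A_{ii})=\omega_p(a_{ii})$. For the off-diagonal ones the normalization $2^{-1/p}$ in the definition of $a_{ij}$ does the work. By Lemma \ref{cor1},
$$
\omega_p\!\begin{pmatrix}0 & A_{12}\\ A_{21} & 0\end{pmatrix}=2^{1/p-1}\underset{\theta\in\mathbb{R}}{\sup}\|e^{i\theta}A_{12}+e^{-i\theta}A_{21}^{*}\|_p,
$$
so multiplying by $2^{-1/p}$ gives $\omega_p(a_{12})=\tfrac12\sup_{\theta}\|e^{i\theta}A_{12}+e^{-i\theta}A_{21}^{*}\|_p$, from which $\|[\mathrm{Re}(e^{i\theta}A)]_{12}\|_p\le\omega_p(a_{12})$ follows for every fixed $\theta$. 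The $(2,1)$-block bound is identical because $a_{21}$ is obtained from $a_{12}$ by conjugation with the swap unitary $\begin{pmatrix}0 & I\\ I & 0\end{pmatrix}$, so $\omega_p(a_{21})=\omega_p(a_{12})$ by Property \ref{omegan}.

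Substituting the four block estimates into the Bhatia--Kittaneh inequality gives, for every $\theta$,
$$
\|\mathrm{Re}(e^{i\theta}A)\|_p^{p}\le c_p\sum_{i,j=1}^{2}\omega_p^{p}(a_{ij}),
$$
and taking $\sup_\theta$ closes the proof. The only delicate point is the bookkeeping of the scaling $2^{-1/p}$: it is exactly the factor that makes the $2^{1/p-1}$ coming out of Lemma \ref{cor1} cancel with the $\tfrac12$ produced by taking the real part, leaving the clean inequality $\|[\mathrm{Re}(e^{i\theta}A)]_{ij}\|_p\le\omega_p(a_{ij})$ with no stray powers of two.
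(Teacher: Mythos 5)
Your proposal is correct and follows essentially the same route as the paper: apply the Bhatia--Kittaneh inequalities (\ref{eqn3}) and (\ref{eqn4}) with $n=2$ to $\mathrm{Re}(e^{i\theta}A)$, bound each block via Lemma \ref{cor1} (the $2^{-1/p}$ normalization cancelling exactly as you describe), and take the supremum over $\theta$. The only cosmetic difference is that you handle the $(2,1)$ block by a swap-unitary symmetry argument rather than by invoking Lemma \ref{cor1} directly with the roles of the blocks exchanged, which changes nothing of substance.
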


\begin{proof}
	We have
	\begin{align*}
		\norm{ Re(e^{i\theta }A)} _{p} 
		&=\norm{
			\begin{bmatrix}
				Re(e^{i\theta }A_{11}) & \frac{1}{2}(e^{i\theta }A_{12}+e^{-i\theta
				}A_{21}^{\ast }) \\
				\frac{1}{2}(e^{i\theta }A_{21}+e^{-i\theta }A_{12}^{\ast }) & Re
				(e^{i\theta }A_{22})
			\end{bmatrix}
		} _{p},
	\end{align*}
	\noindent then by inequality
 (\ref{eqn3}) and Lemma \ref{cor1}, we get
	\begin{align*}
		\norm{ Re(e^{i\theta }A)} _{p}^{p} &\leq \frac{1}{2^{p-2}}
		\overset{2}{\underset{i,j=1}{\sum }}\norm{ (Re(e^{i\theta
			}A))_{ij}} _{p}^{p} \\
		&=\frac{1}{2^{p-2}}\left( \underset{i=j}{\sum }\norm{ Re(e^{i\theta
			}A_{ij})} _{p}^{p}+\underset{i\neq j}{\sum }\left(\frac{1}{2}
		\norm{ e^{i\theta }A_{ij}+e^{-i\theta }A_{ji}^{\ast }}
		_{p}\right)^{p}\right)  \\ 
		&\leq  \frac{1}{2^{p-2}}\left( \underset{i=j}{\sum }\omega_{p}^{p}(A_{ij})+\underset{i\neq j}{
			\sum }\left( 2^{-\frac{1}{p}}\omega_{p}\left(
		\begin{bmatrix}
			0 & A_{ij} \\
			A_{ji} & 0
		\end{bmatrix}
		\right) \right) ^{p}\right)  \\
		&=  \frac{1}{2^{p-2}}\underset{i=j}{\sum }\omega_{p}^{p}(a_{ij}).
	\end{align*}
	\noindent Then
	\begin{align*}
		\omega_{p}^{p}(A) &=\underset{\theta \in
			\mathbb{R}
		}{\sup }\norm{ Re(e^{i\theta }A)} _{p}^{p} \\
		&\leq \frac{1}{2^{p-2}}\underset{i=j}{\sum }\omega_{p}^{p}(a_{ij})
	\end{align*}
	\noindent for $2\leq p\leq \infty $. The second inequality is proved in a similar manner, using Lemma \ref{cor1}, and  inequality (\ref{eqn4}).
\end{proof}

\section{Off-Diagonal $2 \times 2$ Block Matrices Inequalities}\label{sec9}
\hspace{\parindent}
In this section, our interest was finding inequalities for $\omega_{p}$ of the off-diagonal $2 \times 2$ block matrices. The following lemma is useful in our work.

\begin{lem}\label{abcd}
	Let $A, B \in M_{n}(\mathbb{C})$, then
	\begin{itemize}
		\item[a)] $\omega_{p}\left(
		\begin{bmatrix}
			0 & A \\
			e^{i\theta}B & 0
		\end{bmatrix}\right)=
		\omega_{p}\left(\begin{bmatrix}
			0 & A \\
			B & 0
		\end{bmatrix}\right)$
		for all $\theta \in \mathbb{R}.$
		\item[b)] $\omega_{p}\left(
		\begin{bmatrix}
			0 & A \\
			B & 0
		\end{bmatrix}\right)=\omega_{p}\left(\begin{bmatrix}
			0 & B \\
			A & 0
		\end{bmatrix}\right).$
		
	\end{itemize}
	\noindent for all $p$.
\end{lem}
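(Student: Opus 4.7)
The plan is to derive both identities from invariance properties of $\omega_{p}$ rather than by direct calculation. Two ingredients drive the argument. First, by Property \ref{omegan}(b) together with the (unitary, hence weak unitary) invariance of the Schatten $p$-norm, $\omega_{p}$ is weakly unitarily invariant, so $\omega_{p}(UXU^{\ast})=\omega_{p}(X)$ for every unitary $U$. Second, from the definition $\omega_{p}(X)=\sup_{\theta\in\Real}\norm{Re(e^{i\theta}X)}_{p}$ it is immediate that $\omega_{p}(\lambda X)=\omega_{p}(X)$ whenever $|\lambda|=1$, simply by shifting the supremum variable by $\arg\lambda$.

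For part (a), I would conjugate by the block diagonal unitary $U=\begin{bmatrix} e^{-i\theta/4}I_{n} & 0 \\ 0 & e^{i\theta/4}I_{n}\end{bmatrix}$ acting on $\Complex^{2n}$. A direct $2\times2$ block computation yields
\[
U\begin{bmatrix} 0 & A \\ B & 0 \end{bmatrix}U^{\ast}=\begin{bmatrix} 0 & e^{-i\theta/2}A \\ e^{i\theta/2}B & 0 \end{bmatrix},
\]
and multiplying this matrix by the unit scalar $e^{i\theta/2}$ converts the right-hand side into exactly $\begin{bmatrix} 0 & A \\ e^{i\theta}B & 0 \end{bmatrix}$. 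Combining weak unitary invariance with scalar phase-invariance of $\omega_{p}$ then delivers the stated equality.

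For part (b), the natural unitary to use is the self-adjoint swap $U=\begin{bmatrix} 0 & I_{n} \\ I_{n} & 0 \end{bmatrix}$. A one-line block computation gives
\[
U\begin{bmatrix} 0 & A \\ B & 0 \end{bmatrix}U^{\ast}=\begin{bmatrix} 0 & B \\ A & 0 \end{bmatrix},
\]
so a second application of weak unitary invariance closes the argument.

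No serious obstacle is expected; the only point worth flagging is that the scalar phase-invariance $\omega_{p}(\lambda X)=\omega_{p}(X)$ for $|\lambda|=1$ is used but not stated explicitly in the excerpt, so it should be justified from the definition of $\omega_{p}$. An alternative route I would keep in reserve is to apply Lemma \ref{cor1} to reduce each side of both identities to a supremum of Schatten $p$-norms and then conclude by a change of variable in $\theta$ together with $\norm{Y}_{p}=\norm{Y^{\ast}}_{p}$; this avoids the auxiliary unitaries but is computationally a bit heavier.
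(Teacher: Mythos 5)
Your argument is correct and is essentially the proof the paper gives: part (a) by conjugating with a diagonal phase unitary and then absorbing the leftover unimodular scalar via $\omega_{p}(\lambda X)=\omega_{p}(X)$ for $|\lambda|=1$ (the paper uses $U=\mathrm{diag}(I,e^{i\theta/2}I)$ where you use the symmetric $\mathrm{diag}(e^{-i\theta/4}I,e^{i\theta/4}I)$, an immaterial difference), and part (b) by conjugating with the same swap unitary. Your explicit justification of the scalar phase-invariance is a small improvement, since the paper uses that step silently.
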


\begin{proof}
	Let $U=\begin{bmatrix}
		I & 0 \\
		0 & e^{i\frac{\theta}{2}}I
	\end{bmatrix}$, then $U$ is unitary. Then by Property \ref{omegan} we have
	\begin{align*}
		\omega_{p}\left(
		\begin{bmatrix}
			0 & A \\
			B & 0
		\end{bmatrix}\right) &= \omega_{p}\left(U
		\begin{bmatrix}
			0 & A \\
			B & 0
		\end{bmatrix}U^{\ast}\right) \\
		&= \omega_{p}\left(e^{-i\frac{\theta}{2}}
		\begin{bmatrix}
			0 & A \\
			e^{i\theta}B & 0
		\end{bmatrix}\right) \\
		&= \omega_{p}\left(
		\begin{bmatrix}
			0 & A \\
			e^{i\theta}B & 0
		\end{bmatrix}\right)
	\end{align*}
	\noindent which ends the proof of (a). Now to prove the equality (b), consider
	$U=\begin{bmatrix}
		0 & I \\
		I & 0
	\end{bmatrix}$, then $U$ is unitary.\newline
	Then by Property \ref{omegan} we have
	\begin{align*}
		\omega_{p}\left(
		\begin{bmatrix}
			0 & A \\
			B & 0
		\end{bmatrix}\right) &= \omega_{p}\left(U
		\begin{bmatrix}
			0 & A \\
			B & 0
		\end{bmatrix}U^{\ast}\right) \\
		&= \omega_{p}\left(
		\begin{bmatrix}
			0 & B \\
			A & 0
		\end{bmatrix}\right) 
	\end{align*}
	\noindent which ends the proof.
\end{proof}

The next theorem gives upper and lower bounds for $\omega_{p}\left(
\begin{bmatrix}
	0 & A \\
	B & 0
\end{bmatrix}\right)$ in terms of $\omega_{p}(A+B)$ and $\omega_{p}(A-B)$.

\begin{thm}\label{theorem}
	Let $A,B \in M_{n}(\Complex)$, then
	\begin{equation*}
		\frac{\max\left(\omega_{p}(A+B),\omega_{p}(A-B)\right)}{2^{1-\frac{1}{p}}}\leq
		\omega_{p}\left(
		\begin{bmatrix}
			0 & A \\
			B & 0
		\end{bmatrix}\right) \leq \frac{\omega_{p}(A+B)+\omega_{p}(A-B)}{2^{1-\frac{1}{p}}}
	\end{equation*} \noindent for all $p$.
\end{thm}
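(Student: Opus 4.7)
The plan is to pass between the off-diagonal block matrix $\begin{bmatrix} 0 & A \\ B & 0 \end{bmatrix}$ and the symmetric-off-diagonal forms $\begin{bmatrix} 0 & X \\ X & 0 \end{bmatrix}$ via the elementary identities
\begin{align*}
\begin{bmatrix} 0 & A \\ B & 0 \end{bmatrix} &= \frac{1}{2}\begin{bmatrix} 0 & A+B \\ A+B & 0 \end{bmatrix} + \frac{1}{2}\begin{bmatrix} 0 & A-B \\ -(A-B) & 0 \end{bmatrix}, \\
\begin{bmatrix} 0 & A+B \\ A+B & 0 \end{bmatrix} &= \begin{bmatrix} 0 & A \\ B & 0 \end{bmatrix} + \begin{bmatrix} 0 & B \\ A & 0 \end{bmatrix}, \qquad
\begin{bmatrix} 0 & A-B \\ -(A-B) & 0 \end{bmatrix} = \begin{bmatrix} 0 & A \\ B & 0 \end{bmatrix} - \begin{bmatrix} 0 & B \\ A & 0 \end{bmatrix}.
\end{align*}
These reduce everything to matrices whose $\omega_{p}$ is already known via Lemma \ref{lemma1.4}, namely $\omega_{p}\begin{bmatrix} 0 & X \\ X & 0 \end{bmatrix} = 2^{1/p}\,\omega_{p}(X)$.

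For the upper bound, I would apply the triangle inequality (valid since $\omega_{p}$ is a norm) to the first identity. The first summand yields $2^{1/p}\omega_{p}(A+B)$ directly from Lemma \ref{lemma1.4}. For the second summand, I first use Lemma \ref{abcd}(a) with $\theta=\pi$ to flip the sign in the bottom-left block, converting $\begin{bmatrix} 0 & A-B \\ -(A-B) & 0 \end{bmatrix}$ into $\begin{bmatrix} 0 & A-B \\ A-B & 0 \end{bmatrix}$ without changing $\omega_{p}$, and then Lemma \ref{lemma1.4} gives $2^{1/p}\omega_{p}(A-B)$. Combining with the factor $\tfrac{1}{2}$ and rewriting $\tfrac{2^{1/p}}{2}=\tfrac{1}{2^{1-1/p}}$ delivers the claimed upper estimate.

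For the lower bound, I would use the latter two identities together with Lemma \ref{abcd}(b), which gives $\omega_{p}\begin{bmatrix} 0 & B \\ A & 0 \end{bmatrix}=\omega_{p}\begin{bmatrix} 0 & A \\ B & 0 \end{bmatrix}$. The triangle inequality then yields
\[
2^{1/p}\omega_{p}(A\pm B) \;=\; \omega_{p}\begin{bmatrix} 0 & A\pm B \\ \pm(A\pm B) & 0 \end{bmatrix} \;\leq\; 2\,\omega_{p}\begin{bmatrix} 0 & A \\ B & 0 \end{bmatrix},
\]
where for the minus case I again invoke Lemma \ref{abcd}(a) with $\theta=\pi$ to handle the sign in the bottom-left block before applying Lemma \ref{lemma1.4}. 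Dividing by $2$ and taking the maximum over the two choices of sign gives the lower bound.

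I do not expect a serious obstacle: the whole argument is a two-line decomposition together with three applications of previously established lemmas. The only subtle point is remembering to pass through Lemma \ref{abcd}(a) with $\theta=\pi$ to replace $\begin{bmatrix} 0 & Y \\ -Y & 0 \end{bmatrix}$ with $\begin{bmatrix} 0 & Y \\ Y & 0 \end{bmatrix}$ so that Lemma \ref{lemma1.4} becomes applicable; without this step one is stuck, since Lemma \ref{lemma1.4} requires equal off-diagonal blocks.
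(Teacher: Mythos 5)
Your argument is correct, and it splits naturally into two halves when compared with the paper. For the lower bound you follow essentially the same route as the paper: you realize $2^{\frac{1}{p}}\omega_{p}(A\pm B)$ as $\omega_{p}$ of $\begin{bmatrix} 0 & A\pm B \\ \pm(A\pm B) & 0 \end{bmatrix} = \begin{bmatrix} 0 & A \\ B & 0 \end{bmatrix} \pm \begin{bmatrix} 0 & B \\ A & 0 \end{bmatrix}$, apply the triangle inequality together with Lemma \ref{abcd}(b), and handle the sign in the minus case via Lemma \ref{abcd}(a) with $\theta=\pi$; the paper obtains the minus case by substituting $B\mapsto -B$ into the already-proved plus case rather than writing out the difference identity, but the computation is the same. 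For the upper bound your route is genuinely different: you decompose $\begin{bmatrix} 0 & A \\ B & 0 \end{bmatrix}$ directly as $\frac{1}{2}\begin{bmatrix} 0 & A+B \\ A+B & 0 \end{bmatrix} + \frac{1}{2}\begin{bmatrix} 0 & A-B \\ -(A-B) & 0 \end{bmatrix}$ and evaluate both summands exactly by Lemma \ref{abcd}(a) and Lemma \ref{lemma1.4}, whereas the paper first conjugates by the unitary $U=\frac{1}{\sqrt{2}}\begin{bmatrix} I & -I \\ I & I \end{bmatrix}$, splits the conjugated matrix into its diagonal and off-diagonal parts, and then needs Proposition \ref{prop} to bound the diagonal block $\begin{bmatrix} -(A+B) & 0 \\ 0 & A+B \end{bmatrix}$. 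The two decompositions are equivalent up to conjugation by $U$, but yours is more economical: it bypasses the unitary conjugation and Proposition \ref{prop} entirely and reuses only the two lemmas already needed for the lower bound. The caveat you flag at the end --- that one must pass through Lemma \ref{abcd}(a) with $\theta=\pi$ to replace $\begin{bmatrix} 0 & Y \\ -Y & 0 \end{bmatrix}$ by $\begin{bmatrix} 0 & Y \\ Y & 0 \end{bmatrix}$ before Lemma \ref{lemma1.4} applies --- is precisely the step the paper also relies on, so no gap remains.
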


\begin{proof}
	By Lemma \ref{lemma1.4}, we have
	\begin{align*}
		2^{\frac{1}{p}}\omega_{p}(A+B) &= \omega_{p}\left(
		\begin{bmatrix}
			0 & A+B \\
			A+B & 0
		\end{bmatrix}\right) \\ 
		&= \omega_{p}\left(
		\begin{bmatrix}
			0 & B \\
			A & 0
		\end{bmatrix}+
		\begin{bmatrix}
			0 & A \\
			B & 0
		\end{bmatrix}\right)\\ 
		&\leq  \omega_{p}\left(
		\begin{bmatrix}
			0 & B \\
			A & 0
		\end{bmatrix}\right)+
		\omega_{p}\left(\begin{bmatrix}
			0 & A \\
			B & 0
		\end{bmatrix}\right)\hspace{30pt}\mbox{(by triangle inequality).} \\
		&= 2\omega_{p}\left(
		\begin{bmatrix}
			0 & A \\
			B & 0
		\end{bmatrix}\right)\hspace{30pt}\mbox{(by Lemma \ref{abcd}),}
	\end{align*} 
	\noindent then
	\begin{equation}\label{eqn7}
		\omega_{p}\left(
		\begin{bmatrix}
			0 & A \\
			B & 0
		\end{bmatrix}\right)\geq \frac{1}{2^{1-\frac{1}{p}}}\omega_{p}(A+B),
	\end{equation}
	replacing $B$ by $-B$ in inequality (\ref{eqn7}), we get
	\begin{equation*}
		\omega_{p}\left(
		\begin{bmatrix}
			0 & A \\
			-B & 0
		\end{bmatrix}\right)\geq \frac{1}{2^{1-\frac{1}{p}}}\omega_{p}(A-B),
	\end{equation*}
	taking $\theta=\pi$  in Lemma \ref{abcd}, we get
	\begin{align}
		\omega_{p}\left(
		\begin{bmatrix}
			0 & A \\
			B & 0
		\end{bmatrix}\right) &= \omega_{p}\left(
		\begin{bmatrix}
			0 & A \\
			-B & 0
		\end{bmatrix}\right) \nonumber \\
		&\geq \frac{1}{2^{1-\frac{1}{p}}}\omega_{p}(A-B)\label{eqn8},
	\end{align}
	\noindent therefore, by the estimations (\ref{eqn7}) and (\ref{eqn8}), we get
	\begin{equation*}
		\omega_{p}\left(
		\begin{bmatrix}
			0 & A \\
			B & 0
		\end{bmatrix}\right)\geq \frac{\max\left(\omega_{p}(A+B),\omega_{p}(A-B)\right)}{2^{1-\frac{1}{p}}}.
	\end{equation*}
	Now for the second inequality, consider $U=\frac{1}{\sqrt{2}}
	\begin{bmatrix}
		I & -I \\
		I & I
	\end{bmatrix}$,
	where $I$ is the $n\times n$ identity matrix, then $U$ is unitary, and thus by Property \ref{omegan}, we get
	\begin{align*}
		\omega_{p}\left(\begin{bmatrix}
			0 & A \\
			B & 0
		\end{bmatrix}\right) =& \omega_{p}\left(U
		\begin{bmatrix}
			0 & A \\
			B & 0
		\end{bmatrix}U^{\ast}\right) \\ 
		=& \frac{1}{2}\omega_{p}\left(\begin{bmatrix}
			-(A+B) & A-B \\
			-(A-B) & A+B
		\end{bmatrix}\right)\\
		\leq& \frac{1}{2}\left(\omega_{p}\left(\begin{bmatrix}
			-(A+B) & 0 \\
			0 & A+B
		\end{bmatrix}\right)
		+\omega_{p}\left(\begin{bmatrix}
			0 & A-B \\
			-(A-B) & 0
		\end{bmatrix}\right)\right)\hspace{10pt}\\
		{}&\mbox{(by triangle inequality)}\\
		\leq& \frac{1}{2}\left( 2^{\frac{1}{p}}\omega_{p}(A+B)+2^{\frac{1}{p}}\omega_{p}(A-B)\right) \\ {}& \mbox{(by Proposition \ref{prop}, Lemma \ref{abcd} and Lemma \ref{lemma1.4})}  	.
	\end{align*}
	\noindent as required.
\end{proof}

\begin{cor}
	Let $T\in M_{n}(\mathbb{C})$ such that $T=A+iB$, where $A=Re(T)$ and $B=Im(T)$, then
	\begin{equation*}
		\frac{\omega_{p}(T)}{2}\leq \frac{1}{2^{\frac{1}{p}}}\omega_{p}\left(
		\begin{bmatrix}
			0 & A \\
			B & 0
		\end{bmatrix}\right)\leq \omega_{p}(T).
	\end{equation*} for all $p$.
\end{cor}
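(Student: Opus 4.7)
The plan is to derive the two inequalities separately. Since $A=Re(T)$ and $B=Im(T)$ are Hermitian, the identity $Re(e^{i\theta}H)=\cos\theta\,H$ for a Hermitian $H$ forces $\omega_{p}(H)=\norm{H}_{p}$; in particular $\omega_{p}(A\pm B)=\norm{A\pm B}_{p}$. An analogous expansion yields $\omega_{p}(T)=\underset{\alpha\in\mathbb{R}}{\sup}\norm{\cos\alpha\,A-\sin\alpha\,B}_{p}$, and specialising $\alpha=\mp\pi/4$ produces the identities $A\pm B=\sqrt{2}\,Re(e^{\mp i\pi/4}T)$, which I will use repeatedly.

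For the left-hand inequality, I apply the lower estimate in Theorem \ref{theorem} to get
\[
\omega_{p}\!\left(\begin{bmatrix}0 & A\\ B & 0\end{bmatrix}\right)\geq \frac{\max(\norm{A+B}_{p},\norm{A-B}_{p})}{2^{1-\frac{1}{p}}},
\]
so it suffices to show $\omega_{p}(T)\leq\max(\norm{A+B}_{p},\norm{A-B}_{p})$. For this I will use the algebraic identity
\[
\cos\alpha\,A-\sin\alpha\,B=\tfrac{1}{2}(\cos\alpha-\sin\alpha)(A+B)+\tfrac{1}{2}(\cos\alpha+\sin\alpha)(A-B),
\]
the triangle inequality, and the elementary fact $|\cos\alpha-\sin\alpha|+|\cos\alpha+\sin\alpha|=2\max(|\cos\alpha|,|\sin\alpha|)\leq 2$. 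Taking the supremum over $\alpha$ and dividing by $2^{\frac{1}{p}}$ yields the left half of the corollary.

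For the right-hand inequality, the upper bound in Theorem \ref{theorem} is \emph{not} sharp enough (a check with $A=\mbox{diag}(1,0)$, $B=\mbox{diag}(0,1)$ shows $\norm{A+B}_{p}+\norm{A-B}_{p}$ can exceed $2\omega_{p}(T)$). Instead I will appeal to Lemma \ref{cor1} directly: because $B^{\ast}=B$, it gives
\[
\omega_{p}\!\left(\begin{bmatrix}0 & A\\ B & 0\end{bmatrix}\right)=2^{\frac{1}{p}-1}\underset{\theta}{\sup}\,\norm{e^{i\theta}A+e^{-i\theta}B}_{p}.
\]
Writing $e^{i\theta}A+e^{-i\theta}B=\cos\theta\,(A+B)+i\sin\theta\,(A-B)$ and then substituting $A\pm B=\sqrt{2}\,Re(e^{\mp i\pi/4}T)$, the triangle inequality gives $\norm{e^{i\theta}A+e^{-i\theta}B}_{p}\leq\sqrt{2}(|\cos\theta|+|\sin\theta|)\omega_{p}(T)\leq 2\omega_{p}(T)$, which yields the right half after multiplying by $2^{\frac{1}{p}-1}$.

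The main obstacle is this asymmetry: the upper half of Theorem \ref{theorem} loses a factor and cannot deliver the corollary's right-hand bound, so one must step back to Lemma \ref{cor1} and exploit the Hermiticity $B^{\ast}=B$ that is available only because $B$ is the imaginary part of $T$.
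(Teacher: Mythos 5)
Your proof is correct, but it takes a genuinely different route from the paper's. The paper obtains both bounds in one stroke by substituting $B\mapsto iB$ in Theorem \ref{theorem}: then $A+iB=T$ and $A-iB=T^{\ast}$, the self-adjointness $\omega_{p}(T^{\ast})=\omega_{p}(T)$ collapses both the max and the sum to $\omega_{p}(T)$ and $2\omega_{p}(T)$ respectively, and Lemma \ref{abcd}(a) with $\theta=\pi/2$ converts the $iB$ block back to $B$. You instead apply Theorem \ref{theorem} with the real $B$ for the lower bound, which forces you to supply the extra estimate $\omega_{p}(T)\leq\max\left(\norm{A+B}_{p},\norm{A-B}_{p}\right)$ via the identity $|\cos\alpha-\sin\alpha|+|\cos\alpha+\sin\alpha|=2\max(|\cos\alpha|,|\sin\alpha|)$; and for the upper bound you correctly observe that the direct application of the theorem's right-hand side is too lossy (your $\mbox{diag}$ example is a valid witness) and retreat to Lemma \ref{cor1} together with $B^{\ast}=B$ and $A\pm B=\sqrt{2}\,Re(e^{\mp i\pi/4}T)$. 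All of these steps check out, including the decomposition $e^{i\theta}A+e^{-i\theta}B=\cos\theta\,(A+B)+i\sin\theta\,(A-B)$ and the final constants. What the paper's substitution buys is brevity and symmetry --- the obstacle you flag simply disappears once $A\pm B$ is replaced by $T$ and $T^{\ast}$ --- whereas your argument makes explicit the Hermitian structure of $A$ and $B$ and shows exactly where the unsubstituted upper bound fails, which is a worthwhile observation even though the paper never needs it.
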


\begin{proof}
	Replacing $B$ by $iB$ in theorem \ref{theorem}, we get
	\begin{equation*}
		\frac{\max\left(\omega_{p}(A+iB),\omega_{p}(A-iB)\right)}{2^{1-\frac{1}{p}}}\leq
		\omega_{p}\left(
		\begin{bmatrix}
			0 & A \\
			iB & 0
		\end{bmatrix}\right) \leq \frac{\omega_{p}(A+iB)+\omega_{p}(A-iB)}{2^{1-\frac{1}{p}}},
	\end{equation*}
	 then,
	\begin{equation*}
		\frac{\max\left(\omega_{p}(T),\omega_{p}(T^{\ast})\right)}{2^{1-\frac{1}{p}}}\leq
		\omega_{p}\left(
		\begin{bmatrix}
			0 & A \\
			iB & 0
		\end{bmatrix}\right) \leq \frac{\omega_{p}(T)+\omega_{p}(T^{\ast})}{2^{1-\frac{1}{p}}}.
	\end{equation*}
	However, $\omega_{p}(T)= \omega_{p}(T^{\ast})$, then
	\begin{equation*}
		\frac{\omega_{p}(T)}{2^{1-\frac{1}{p}}}\leq
		\omega_{p}\left(
		\begin{bmatrix}
			0 & A \\
			iB & 0
		\end{bmatrix}\right) \leq \frac{\omega_{p}(T)}{2^{-\frac{1}{p}}}.
	\end{equation*}
	Take $\theta = \frac{\pi}{2}$ in Lemma \ref{abcd}, then
	\begin{equation}\label{m}
		\frac{\omega_{p}(T)}{2^{1-\frac{1}{p}}}\leq
		\omega_{p}\left(
		\begin{bmatrix}
			0 & A \\
			B & 0
		\end{bmatrix}\right) \leq \frac{\omega_{p}(T)}{2^{-\frac{1}{p}}}.
	\end{equation}
	Multiply (\ref{m}) by $2^{-\frac{1}{p}}$, then
	\begin{equation*}
		\frac{\omega_{p}(T)}{2}\leq
		\frac{1}{2^{\frac{1}{p}}}\omega_{p}\left(
		\begin{bmatrix}
			0 & A \\
			B & 0
		\end{bmatrix}\right) \leq \omega_{p}(T).
	\end{equation*}
\end{proof}

\begin{rem}\label{rk1}
	Let $A \in M_{n}(\Complex)$, then we have
	\begin{equation*}
		e^{i(\theta -\frac{\pi}{2})} = -ie^{i\theta} \mbox{, and } e^{-i(\theta -\frac{\pi}{2})} = ie^{-i\theta}.
	\end{equation*}
	Therefore,
	\begin{align*}
		Re(e^{i(\theta - \frac{\pi}{2})}A) &= \frac{e^{i(\theta - \frac{\pi}{2})}A+e^{-i(\theta - \frac{\pi}{2})}A^{\ast}}{2} \\
		&= \frac{-ie^{i\theta}A+ie^{-i\theta}A^{\ast}}{2} \\
		&= \frac{e^{i\theta}A-e^{-i\theta}A^{\ast}}{2i} \\
		&= Im(e^{i\theta}A).
	\end{align*}
	\noindent  Therefore,
	\begin{align*}
		\omega_{p}(A) &= \underset{\alpha \in \mathbb{R}}{sup}\norm{ Re(e^{i\alpha}A)} _{p} \\
		&= \underset{\theta \in \mathbb{R}}{sup}\norm{ Re(e^{i(\theta - \frac{\pi}{2})}A)} _{p} \\
		&= \underset{\theta \in \mathbb{R}}{sup}\norm{ Im(e^{i\theta}A)} _{p}.
	\end{align*}
	See \cite{[8]}
\end{rem}

\begin{rem}\label{rk2}
	Let $X \in M_{n}(\Complex)$, and $2\leq p<\infty$ then
	\begin{align*}
		\omega_{p}\left(
		\begin{bmatrix}
			X & X \\
			-X & -X
		\end{bmatrix}\right) 
		=& \underset{\theta \in \mathbb{R}}{sup}\norm{
			\begin{bmatrix}
				Re(e^{i\theta}X) & Im(e^{i\theta}X) \\
				-Im(e^{i\theta}X) & -Re(e^{i\theta}X)
		\end{bmatrix}} _{p} \\
		\leq{}& \frac{1}{2^{\frac{2}{p}-1}}\underset{\theta \in \mathbb{R}}{sup}\left(2\norm{ Re(e^{i\theta}X)} _{p}^{p}+2\norm{ Im(e^{i\theta}X)} _{p}^{p}\right)^{\frac{1}{p}}  \\
		{}&\mbox{(by inequality (\ref{eqn3}))}\\
		=& \frac{1}{2^{\frac{2}{p}-1}}2^{\frac{2}{p}}\omega_{p}(X) \hspace{30pt}\mbox{(by Remark \ref{rk1})} \\
		=& 2\omega_{p}(X).
	\end{align*}
\end{rem}

\begin{thm}
	Let $A,B \in M_{n}(\Complex)$, then
	\begin{equation*}
		\omega_{p}\left(\begin{bmatrix}
			0 & A \\
			B & 0
		\end{bmatrix}\right)\leq
		2^{\frac{1}{p}}\min(\omega_{p}(A),\omega_{p}(B))+\min(\omega_{p}(A+B),\omega_{p}(A-B)).
	\end{equation*} for $2\leq p<\infty.$
\end{thm}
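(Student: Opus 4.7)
The plan is to exploit the unitary conjugation already appearing in the proof of Theorem \ref{theorem}. Setting $U=\tfrac{1}{\sqrt 2}\begin{bmatrix} I & -I \\ I & I\end{bmatrix}$, the weak unitary invariance of $\omega_{p}$ (Property \ref{omegan}) gives $\omega_{p}\!\left(\begin{bmatrix}0 & A\\ B & 0\end{bmatrix}\right)=\tfrac12\,\omega_{p}(M)$, where
\[
M=\begin{bmatrix}-(A+B) & A-B \\ -(A-B) & A+B\end{bmatrix}.
\]
The idea is then to split $M$ as a sum of a matrix of the shape $\begin{bmatrix} X & X \\ -X & -X\end{bmatrix}$, which is precisely the shape estimated in Remark \ref{rk2}, plus a block-diagonal correction, which Proposition \ref{prop} handles.

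Concretely, taking $X=A-B$ one has the decomposition
\[
M=\begin{bmatrix}A-B & A-B \\ -(A-B) & -(A-B)\end{bmatrix}+\begin{bmatrix}-2A & 0 \\ 0 & 2A\end{bmatrix},
\]
as a direct entry-wise check confirms. The triangle inequality for $\omega_{p}$ reduces the problem to bounding the two summands. Remark \ref{rk2} applied with $X=A-B$ controls the first by $2\omega_{p}(A-B)$, and this is exactly where the hypothesis $p\geq 2$ enters, via inequality (\ref{eqn3}). Proposition \ref{prop} combined with the absolute homogeneity of $\omega_{p}$ controls the second by $2\cdot 2^{1/p}\omega_{p}(A)$. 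Dividing by $2$ then produces the one-sided estimate
\[
\omega_{p}\!\left(\begin{bmatrix}0 & A\\ B & 0\end{bmatrix}\right)\leq 2^{1/p}\omega_{p}(A)+\omega_{p}(A-B).
\]

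To finish, I would invoke the two symmetries collected in Lemma \ref{abcd}. Part (b), which swaps $A\leftrightarrow B$ inside the block matrix, replaces $\omega_{p}(A)$ by $\omega_{p}(B)$ on the right; part (a) taken at $\theta=\pi$ replaces $B$ by $-B$, and since $\omega_{p}(-B)=\omega_{p}(B)$ this turns $\omega_{p}(A-B)$ into $\omega_{p}(A+B)$. Using these symmetries separately or in combination with the one-sided estimate yields four inequalities, one for each choice of $\omega_{p}(A)$ vs $\omega_{p}(B)$ and of $\omega_{p}(A+B)$ vs $\omega_{p}(A-B)$; taking the minimum of the four via the elementary identity $\min_{i,j}(a_{i}+b_{j})=\min_{i}a_{i}+\min_{j}b_{j}$ gives the stated bound. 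The only non-routine step of the whole argument is choosing the decomposition of $M$ correctly; once it is in hand, the rest is a clean combination of the triangle inequality, Remark \ref{rk2}, Proposition \ref{prop}, and the symmetry lemma.
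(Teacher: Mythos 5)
Your proof is correct and follows essentially the same route as the paper: conjugate by the Hadamard-type unitary, split the result into a $\begin{bmatrix} X & X \\ -X & -X\end{bmatrix}$ block handled by Remark \ref{rk2} plus a correction term, and then use the symmetries of Lemma \ref{abcd} to pass to the minima. The only (immaterial) difference is that your decomposition is the mirror image of the paper's --- you put $A-B$ into the Remark \ref{rk2} block and absorb $A$ into a block-diagonal correction estimated via Proposition \ref{prop}, whereas the paper puts $A+B$ there and absorbs $B$ into an off-diagonal correction estimated via Lemmas \ref{abcd} and \ref{lemma1.4}.
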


\begin{proof}
	Consider $U=\frac{1}{\sqrt{2}}\begin{bmatrix}
		I & I \\
		-I & I
	\end{bmatrix}$,
	where $I$ is the $n\times n$ identity matrix, then $U$ is unitary, we have
	\begin{align}
		\omega_{p}\left(\begin{bmatrix}
			0 & A \\
			B & 0
		\end{bmatrix}\right) =& \omega_{p}\left(U\begin{bmatrix}
			0 & A \\
			B & 0
		\end{bmatrix}U^{\ast}\right) \nonumber \\ 
		=& \frac{1}{2}\omega_{p}\left(\begin{bmatrix}
			A+B & A-B \\
			-(A-B) & -(A+B)
		\end{bmatrix}\right) \nonumber \\
		=& \frac{1}{2}\omega_{p}\left(\begin{bmatrix}
			A+B & A+B \\
			-(A+B) & -(A+B)
		\end{bmatrix}+
		\begin{bmatrix}
			0 & -2B \\
			2B & 0
		\end{bmatrix}\right) \nonumber \\ \displaybreak
		\leq{}& \frac{1}{2}\left(\omega_{p}\left(\begin{bmatrix}
			A+B & A+B \\
			-(A+B) & -(A+B)
		\end{bmatrix}\right)+
		\omega_{p}\left(\begin{bmatrix}
			0 & -2B \\
			2B & 0
		\end{bmatrix}\right)\right).\nonumber \\
		{}& \mbox{(by triangle inequality)}\nonumber\\
		\leq& \frac{1}{2}\left( 2\omega_{p}(A+B)+2^{\frac{1}{p}}\omega_{p}(2B)\right) \mbox{(by Remark \ref{rk2}, Lemma \ref{abcd} and Lemma \ref{lemma1.4})} \nonumber\\
		=& \omega_{p}(A+B)+2^{\frac{1}{p}}\omega_{p}(B). \label{eqn9}
	\end{align}

	\noindent Replacing $B$ by $-B$ in (\ref{eqn9}), we get
	\begin{align} 
		\omega_{p}\left(\begin{bmatrix}
			0 & A \\
			B & 0
		\end{bmatrix}\right) &= \omega_{p}\left(\begin{bmatrix}
			0 & A \\
			-B & 0
		\end{bmatrix}\right) \nonumber \hspace{30pt} \mbox{(taking $\theta = \pi$ in Lemma \ref{abcd})}\\
		&\leq \omega_{p}(A-B)+2^{\frac{1}{p}}\omega_{p}(B). \label{eqn10}
	\end{align}
	\noindent Then by (\ref{eqn9}) and (\ref{eqn10}), we get
	\begin{equation}\label{eqn11}
		\omega_{p}\left(\begin{bmatrix}
			0 & A \\
			B & 0
		\end{bmatrix}\right)\leq 2^{\frac{1}{p}}\omega_{p}(B)+\min(\omega_{p}(A+B),\omega_{p}(A-B)).
	\end{equation}
	Interchanging $A$ and $B$ in (\ref{eqn11}), we get
	\begin{align}
		\omega_{p}\left(\begin{bmatrix}
			0 & A \\
			B & 0
		\end{bmatrix}\right) &= \omega_{p}\left(\begin{bmatrix}
			0 & B \\
			A & 0
		\end{bmatrix}\right)\nonumber \hspace{30pt} \mbox{(by Lemma \ref{abcd})} \\
		&\leq 2^{\frac{1}{p}}\omega_{p}(A)+\min(\omega_{p}(A+B),\omega_{p}(A-B)). \label{eqn12}
	\end{align}
	\noindent Therefore, by (\ref{eqn11}) and (\ref{eqn12}), we have
	\begin{equation*}
		\omega_{p}\left(\begin{bmatrix}
			0 & A \\
			B & 0
		\end{bmatrix}\right)\leq 2^{\frac{1}{p}}\min(\omega_{p}(A),\omega_{p}(B))+\min(\omega_{p}(A+B),\omega_{p}(A-B))
	\end{equation*}
	as required.\\
\end{proof}

\section{An Application}
In this section we present an application, which is a refinement of the triangle inequality for the generalized Schatten p-numerical radius. The following remark is presented by "Yamazaki" in \cite{[14]}

\begin{rem}\label{rk3}
	Let $T \in M_{n}(\Complex)$, we have
	\begin{equation*}
		\omega_{p}(T)= \underset{\alpha^{2}+\beta^{2}=1}{sup}\norm{\alpha Re(T)+\beta Im(T)}_{p}.
	\end{equation*}
	for all $p$. Then
	\begin{equation*}
		\norm{T+T^{\ast}}_{p}\leq 2\omega_{p}(T).
	\end{equation*}
\end{rem}

\begin{proof}
	We have
	\begin{align*}
		\omega_{p}(T) =& \underset{\theta \in \mathbb{R}}{sup}\norm{Re(e^{i\theta}T)}_{p} \\
		=& \frac{1}{2}\underset{\theta \in\mathbb{R}}{sup}\norm{e^{i\theta}T+e^{-i\theta}T^{\ast}}_{p} \\
		=& \underset{\theta \in\mathbb{R}}{sup}\norm{\cos(\theta) T+i\sin( \theta) T+\cos(\theta) T^{\ast}-i\sin(\theta) T^{\ast}}_{p} \\
		=& \underset{\theta \in\mathbb{R}}{sup}\norm{\cos(\theta) Re(T)-\sin(\theta) Im(T)}_{p} \\
		=& \underset{\alpha^{2}+\beta^{2}=1}{sup}\norm{\alpha Re(T)+\beta Im(T)}_{p}.
	\end{align*}
	\noindent  Take $\theta=2\pi$, then
	\begin{align*}
		\omega_{p}(T) &\geq \norm{\cos(2\pi) Re(T)-\sin(2\pi) Im(T)}_{p} \\
		&= \frac{1}{2}\norm{T+T^{\ast}}_{p}
	\end{align*}
	\noindent so, $\norm{T+T^{\ast}}_{p}\leq 2\omega_{p}(T)$.
\end{proof}

The next theorem is a refinement of the triangle inequality for the generalized Schatten p-numerical radius.

\begin{thm}\label{application}
	Let $A,B \in M_{n}(\Complex)$, then
	\begin{equation*}
		\norm{ A+B }_{p}\leq 2^{1-\frac{1}{p}}\omega_{p}\left(\begin{bmatrix}
			0 & A \\
			B^{\ast} & 0
		\end{bmatrix}\right)\leq
		\norm{A}_{p}+\norm{B}_{p}.
	\end{equation*} for all $p$.
\end{thm}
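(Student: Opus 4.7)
The plan is to reduce everything to Lemma~\ref{cor1}. Applying that lemma with $B$ replaced by $B^{*}$ gives
\[
\omega_{p}\!\left(\begin{bmatrix} 0 & A \\ B^{*} & 0 \end{bmatrix}\right)
= 2^{\frac{1}{p}-1}\sup_{\theta\in\mathbb{R}}\norm{e^{i\theta}A + e^{-i\theta}B}_{p},
\]
since $(B^{*})^{*}=B$. Multiplying through by $2^{1-\frac{1}{p}}$ cancels the prefactor on the right, so
\[
2^{1-\frac{1}{p}}\omega_{p}\!\left(\begin{bmatrix} 0 & A \\ B^{*} & 0 \end{bmatrix}\right)
= \sup_{\theta\in\mathbb{R}}\norm{e^{i\theta}A + e^{-i\theta}B}_{p}.
\]
This single identity makes both inequalities essentially automatic.

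For the lower bound, I would specialize to $\theta=0$ in the supremum; this yields $\norm{A+B}_{p}$, which is therefore dominated by the supremum and hence by $2^{1-\frac{1}{p}}\omega_{p}\big(\big[\begin{smallmatrix} 0 & A \\ B^{*} & 0\end{smallmatrix}\big]\big)$.

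For the upper bound, I would apply the triangle inequality inside the supremum together with the fact that the Schatten $p$-norm is unitarily (hence scalar-of-modulus-one) invariant:
\[
\norm{e^{i\theta}A + e^{-i\theta}B}_{p} \le \norm{e^{i\theta}A}_{p} + \norm{e^{-i\theta}B}_{p} = \norm{A}_{p}+\norm{B}_{p}.
\]
Since the right side is independent of $\theta$, taking the sup on the left preserves the bound.

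There is no real obstacle here: once Lemma~\ref{cor1} is available, the whole statement is a two-line consequence. The only thing to be careful about is matching the constant $2^{1-\frac{1}{p}}$ in the theorem with the $2^{\frac{1}{p}-1}$ coming out of the lemma, and remembering to apply the lemma to the matrix with $B^{*}$ in the $(2,1)$-entry rather than $B$.
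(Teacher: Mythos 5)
Your proof is correct, and for the first inequality it takes a more direct route than the paper. For the upper bound the two arguments coincide: the paper also reduces $\omega_{p}\bigl(\bigl[\begin{smallmatrix} 0 & A \\ B^{*} & 0\end{smallmatrix}\bigr]\bigr)$ to $2^{\frac{1}{p}-1}\sup_{\theta}\norm{e^{i\theta}A+e^{-i\theta}B}_{p}$ (re-deriving the identity rather than citing Lemma~\ref{cor1}) and then applies the triangle inequality. For the lower bound, however, the paper does not evaluate the supremum at $\theta=0$ as you do; instead it sets $T=\bigl[\begin{smallmatrix} 0 & A \\ B^{*} & 0\end{smallmatrix}\bigr]$, computes $\norm{T+T^{*}}_{p}^{p}=2\norm{A+B}_{p}^{p}$ via the block-matrix identities (\ref{eq1}), (\ref{eq2}), (\ref{eqn6}), and invokes Yamazaki's bound $\norm{T+T^{*}}_{p}\leq 2\omega_{p}(T)$ from Remark~\ref{rk3}. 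Since that remark is itself proved by picking a particular $\theta$ in the supremum defining $\omega_{p}$, your argument captures the same underlying idea while bypassing both the remark and the block-norm computation; what the paper's detour buys is a presentation that exhibits the theorem as a consequence of the Yamazaki-type characterization, which is the thematic point of the application section. Your version is shorter and equally rigorous, provided you note (as you do) that Lemma~\ref{cor1} holds for all $p$ and that $\norm{e^{\pm i\theta}C}_{p}=\norm{C}_{p}$ by absolute homogeneity.
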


\begin{proof}
	Let $T=\begin{bmatrix}
		0 & A \\
		B^{\ast} & 0
	\end{bmatrix}$.
	We have
	\begin{align}
		\norm{\begin{bmatrix}
				0 & A+B \\
				A^{\ast}+B^{\ast} & 0
		\end{bmatrix}}_{p}^{p} =& \norm{(A+B)\oplus(A^{\ast}+B^{\ast})}_{p}^{p}\nonumber \hspace{30pt} \mbox{(by equation (\ref{eq2})}\\
		=& \norm{(A+B)\oplus(A+B)}_{p}^{p} \nonumber \hspace{30pt} \mbox{(by equation (\ref{eq1}))} \\
		=& 2 \norm{ A+B}_{p}^{p}. \hspace{30pt} \mbox{(by equation (\ref{eqn6}))} \label{eq}
	\end{align}
	\noindent  Then,
	\begin{align*}
		2 \norm{ A+B}_{p}^{p} =& \norm{\begin{bmatrix}
				0 & A+B \\
				A^{\ast}+B^{\ast} & 0
		\end{bmatrix}}_{p}^{p} \\
		=& \norm{ T+T^{\ast}}_{p}^{p} \\
		\leq& 2^{p}\omega_{p}^{p}(T) \hspace{30pt} \mbox{(by Remark \ref{rk3})} \\
		=& 2^{p}\omega_{p}^{p}\left(\begin{bmatrix}
			0 & A \\
			B^{\ast} & 0
		\end{bmatrix}\right).
	\end{align*}
	\noindent Thus,
	\begin{equation*}
		\norm{ A+B}_{p}\leq 2^{1-\frac{1}{p}}\omega_{p}\left(\begin{bmatrix}
			0 & A \\
			B^{\ast} & 0
		\end{bmatrix}\right).
	\end{equation*}
	For the second inequality, we have
	\begin{align*}
		\omega_{p}(T) =& \underset{\theta \in \mathbb{R}}{sup}\norm{ Re(e^{i\theta}T)}_{p} \\ 
		=& \frac{1}{2}\underset{\theta \in \mathbb{R}}{sup}\norm{ \begin{bmatrix}
				0 & e^{i\theta}A+e^{-i\theta}B \\
				e^{-i\theta}A^{\ast}+e^{i\theta}B^{\ast} & 0
		\end{bmatrix}}_{p} \\ 
		=& \frac{1}{2^{1-\frac{1}{p}}}\underset{\theta \in \mathbb{R}}{sup}\norm{ e^{i\theta}A+e^{-i\theta}B}_{p} \hspace{30pt} \mbox{(by same argument as (\ref{eq}))}\\
		\leq& \frac{1}{2^{1-\frac{1}{p}}}\left(\norm{ A}_{p}+\norm{ B}_{p}\right).\hspace{30pt} \mbox{(by triangle inequality)}
	\end{align*}
	\noindent Therefore,
	\begin{equation*}
		\norm{ A+B}_{p}\leq 2^{1-\frac{1}{p}}\omega_{p}\left(\begin{bmatrix}
			0 & A \\
			B^{\ast} & 0
		\end{bmatrix}\right)\leq
		\norm{ A}_{p}+\norm{ B}_{p}.
	\end{equation*}
\end{proof}

\end{document}